\documentclass[12pt]{amsart}
\usepackage{graphicx}
\usepackage{amsmath,amsthm, amsfonts,amssymb, stmaryrd,yfonts,pxfonts,pifont,eufrak, bbm}
\vfuzz2pt 
\hfuzz2pt 

\newtheorem{Cor}{Corollary}
 \newtheorem{Lemma}{Lemma}
 
 \newtheorem{ex}{Example}
 \newtheorem{Proposition}{Proposition}
 \theoremstyle{definition}
 
 \theoremstyle{remark}
 \newtheorem{Remark}[Lemma]{Remark}
 \numberwithin{equation}{subsection}

\begin{document}
\title[ALTERATIONS AND REARRANGEMENTS OF A NON-AUTONOMOUS DYNAMICAL SYSTEM ]{ALTERATIONS AND REARRANGEMENTS OF A NON-AUTONOMOUS DYNAMICAL SYSTEM}%
\author[1]{PUNEET SHARMA AND MANISH RAGHAV}
\address[1]{Department of Mathematics, I.I.T. Jodhpur, Nagaur Road, Karwad, Jodhpur-342037, INDIA}%
\email{puneet.iitd@yahoo.com, manishrghv@gmail.com }%


\subjclass[2010]{37B20, 37B55, 54H20}

\keywords{non-autonomous dynamical systems, equicontinuity, transitivity, sensitivity, proximality}

\begin{abstract}
In this paper, we discuss the dynamics of alterations and rearrangements of a non-autonomous dynamical system generated by the family $\mathbb{F}$. We prove that while insertion/deletion of a map in the family $\mathbb{F}$ can disturb the dynamics of a system, the dynamics of the system does not change if the map inserted/deleted is feeble open. In the process, we prove that if the inserted/deleted map is feeble open, the altered system exhibits any form of mixing/sensitivity if and only if the original system exhibits the same. We extend our investigations to properties like equicontinuity, minimality and proximality for the two systems. We prove that any finite rearrangement of a non-autonomous dynamical system preserves the dynamics of original system if the family $\mathbb{F}$ is feeble open. We also give examples to show that the dynamical behavior of a system need be not be preserved under infinite rearrangement.
\end{abstract}
\maketitle

%
%

\section{INTRODUCTION}

Dynamical systems have been long used to investigate various physical processes occurring in nature. The theory has been applied effectively across various disciplines of sciences and engineering and has helped providing solutions to a variety of modern day problems. To name a few, the theory has been applied to address problems like "determining chemical dynamics of a system", "estimating population growth of a species" and "controlling dynamics of various electrical and mechanical systems"\cite{md,cd,pd}. The theory developed pertains to determining the dynamics of a general dynamical system and hence is extremely beneficial for addressing problems across a variety of disciplines. Although most of the problems addressed are modelled using autonomous systems, it is intuitive to believe that better estimates can be obtained for a system when the system is modelled in a non-autonomous setting. As any general model approximating any natural or physical process is non-autonomous in nature, such a modification provides greater insight to the problem and hence results in a better approximation of the original system. Thus, it is important to develop the theory of non-autonomous dynamical systems. As a result, some investigations for such a setting in the discrete case have been made and interesting results have been obtained. While \cite{sk1} investigates the topological entropy when the family $\mathbb{F}$ is equicontinuous or uniformly convergent, \cite{sk2} discusses minimality conditions for a non-autonomous system on a compact Hausdorff space while focussing on the case when the non-autonomous system is defined on a compact interval of the real line. In \cite{jd} authors prove that if $f_n\rightarrow f$, in general there is no relation between chaotic behavior of the non-autonomous system generated by $f_n$ and the chaotic behavior of $f$. In \cite{pm2}, authors investigate the dynamics generated by a uniformly convergent sequence of maps. They give conditions under which the dynamics of a non-autonomous system can be determined by the limiting system. In \cite{pm1} authors investigate a non-autonomous system generated by a finite family of maps. In the process, they study properties like transitivity, weak mixing, topological mixing, existence of periodic points, various forms of sensitivities and Li-Yorke chaos. In \cite{bp} authors investigate properties like weakly mixing, topological mixing, topological entropy and Li-Yorke chaos for the non-autonomous system. Although, many of the questions arising for the dynamics of a non-autonomous system have been answered, many questions are still open and are an interesting point of investigation. For example, how does the dynamics of a system change when a map $f$ is introduced in the family $\mathbb{F}$ at $r$-th position? What is the effect on the dynamics of the system when the map at $k$-th position is deleted from the family $\mathbb{F}$? If the family $\mathbb{F}$ is rearranged to obtain a family $\mathbb{G}$, what is the relation between the dynamics of the original system and dynamics of the rearranged system? Under what conditions is the dynamics of a system preserved under alterations (finite insertions/deletions) or rearrangements?\\

So far, each of the questions posed above are open. In this paper, we investigate the relation between the dynamics of a given system and its alteration (or rearrangement). We prove that while alteration of a system $(X,\mathbb{F})$ by an arbitrary continuous map can disturb the dynamics of $(X,\mathbb{F})$, the dynamics of a system is preserved, when altered by a feeble open map. In the process, we prove that if the system is altered by a feeble open map $f$, various forms of mixing and sensitivity are equivalent for the two systems. We extend our investigations to properties like equicontinuity, minimality and proximality for the two system. In the process, we prove that if the family $\mathbb{F}$ is feeble open, any finite rearrangement of the given system preserves the dynamics of the original system. We also give examples to show that the dynamics of a system need not be preserved under infinite rearrangements. Before we move further, we give some of the basic concepts and definitions required.\\

Let $(X,d)$ be a compact metric space and let $\mathbb{F}=\{f_n: n\in\mathbb{N}\}$ be a family of continuous self maps on $X$. For any initial seed $x_0\in X$, any such family $\mathbb{F}$ generates a non-autonomous dynamical system via the relation $x_{n}= f_n(x_{n-1})$. Let $(X,\mathbb{F})$ denote the non-autonomous dynamical system generated by the family $\mathbb{F}$. For any $x_0\in X$, the set $\{ f_n \circ f_{n-1} \circ \ldots \circ f_1(x_0):n\in\mathbb{N}\}$ defines the orbit of the point $x_0$. For any $k\in\mathbb{N}$, let $\mathbb{F}_k$ denote the truncated family $\{f_n: n\geq k+1\}$. The objective of study of a non-autonomous dynamical system is to investigate the orbit of an arbitrary point $x$ in $X$. For notational convenience, let $\omega^n_{n+k}=f_{n+k}\circ f_{n+k-1}\circ\ldots\circ f_{n+1}$ and $\omega_n(x) = f_n\circ f_{n-1}\circ \ldots \circ f_1(x)$ (the state of the system after $n$ iterations). \\

A point $x$ is called \textit{periodic} for  $(X,\mathbb{F})$ if there exists $n\in\mathbb{N}$ such that $\omega_{nk}(x)=x$ for all $k\in \mathbb{N}$. The least such $n$ is known as the period of the point $x$. A system $(X,\mathbb{F})$ is called feeble open if for any non-empty open set $U$ in $X$, $int(f(U))\neq \phi$ for all $f\in \mathbb{F}$. The system $(X,\mathbb{F})$ is equicontinuous if for each $\epsilon>0$, there exists $\delta>0$ such that $d(x,y)<\delta$ implies $d(\omega_n(x),\omega_n(y))<\epsilon$ for all $n\in\mathbb{N},~~x,y\in X$. The system $(X,\mathbb{F})$ is \textit{transitive} (or $\mathbb{F}$ is transitive) if for each pair of non-empty open sets $U,V$ in $X$, there exists $n \in \mathbb{N}$ such that $\omega_n(U)\bigcap V\neq \phi$. The system $(X,\mathbb{F})$ is said to be \textit{minimal} if every point has a dense orbit. The system $(X,\mathbb{F})$ is said to be \textit{weakly mixing} if for any collection of non-empty open sets $U_1, U_2, V_1,V_2$ in $X$ there exists a natural number $n$ such that $\omega_n(U_i) \bigcap V_i \neq \phi$, $i=1,2$. Equivalently, we say that the system is weakly mixing if $\mathbb{F}\times\mathbb{F}$ is transitive. The system is said to be \textit{topologically mixing} if for every pair of non-empty open sets $U, V$ there exists a natural number $K$ such that $\omega_n(U) \bigcap V \neq \phi$ for all $n \geq K$. The system is said to be \textit{sensitive} if there exists a $\delta>0$ such that for each $x\in X$ and each neighborhood $U$ of $x$, there exists $n\in \mathbb{N}$ such that $diam(\omega_n(U))>\delta$. If there exists $K>0$ such that $diam(\omega_n(U))>\delta$ $~~\forall n\geq K$, then the system is \textit{cofinitely sensitive}. A pair $(x,y)$ is proximal for $(X,\mathbb{F})$ if $\liminf\limits_{n\rightarrow \infty} d(\omega_n(x),\omega_n(y))=0$. See \cite{bc,bs,de} for details.\\

\section{Main Results}

Throughout this section, the maps $f_k$ are assumed to be surjective.
\begin{Proposition}
$(X,\mathbb{F})$ is minimal $\Leftrightarrow$ $(X,\mathbb{F}_k)$ is minimal.
\end{Proposition}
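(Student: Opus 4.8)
The plan is to reduce everything to one elementary identity relating orbits in the two systems. For any $x\in X$ and any $n>k$ we have $\omega_n(x)=\omega^k_n(\omega_k(x))$, and hence the orbit of $x$ in $(X,\mathbb{F})$ is the union of the finite set $\{\omega_1(x),\ldots,\omega_k(x)\}$ with the orbit of $\omega_k(x)$ in $(X,\mathbb{F}_k)$, the latter being exactly $\{\omega^k_{k+n}(\omega_k(x)):n\in\mathbb{N}\}$. Each implication is obtained by reading this identity in the appropriate direction, so no deep machinery is needed; the work is entirely in a density bookkeeping step.

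For the implication that $(X,\mathbb{F}_k)$ minimal forces $(X,\mathbb{F})$ minimal, I would argue directly: fix any $x\in X$ and look at $z=\omega_k(x)$. By hypothesis the $\mathbb{F}_k$-orbit of $z$ is dense in $X$, and by the identity above this dense set is contained in the $\mathbb{F}$-orbit of $x$; therefore the $\mathbb{F}$-orbit of $x$ is dense. Since $x$ was arbitrary, $(X,\mathbb{F})$ is minimal. This direction uses neither surjectivity of the $f_k$ nor any topological feature of $X$ beyond what is given.

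For the converse, assume $(X,\mathbb{F})$ is minimal and fix an arbitrary $y\in X$. Here the surjectivity hypothesis of the section is used: since each $f_i$ is onto, the composition $\omega_k=f_k\circ\cdots\circ f_1$ is onto, so we may pick $x\in X$ with $\omega_k(x)=y$. The $\mathbb{F}$-orbit of $x$ is dense, and by the identity the $\mathbb{F}_k$-orbit of $y$ is precisely that dense set with the first $k$ terms $\omega_1(x),\ldots,\omega_k(x)$ deleted. It then remains to see that deleting finitely many points from a dense subset of $X$ leaves a dense subset, after which density of the $\mathbb{F}_k$-orbit of $y$ follows, and minimality of $(X,\mathbb{F}_k)$ follows since $y$ was arbitrary.

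I expect this last step — preservation of density under removal of the initial segment of the orbit — to be the only genuine obstacle. It is immediate when $X$ has no isolated points, since then no nonempty open set is finite and a dense set cannot become non-dense after losing finitely many points; so under a perfectness-type assumption the proof is complete. The remaining case (when $X$ carries isolated points) is the delicate one and must be handled separately, using that a singleton $\{p\}$ with $p$ isolated is open and hence, by minimality of $(X,\mathbb{F})$, meets every $\mathbb{F}$-orbit; one then has to trace through the identity, together with surjectivity of the $\omega_k$, to check that $p$ also lies in every $\mathbb{F}_k$-orbit. I would flag this case explicitly, as it is where the argument is least automatic.
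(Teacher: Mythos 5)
Your argument follows the paper's proof exactly: the direction ``$(X,\mathbb{F}_k)$ minimal $\Rightarrow$ $(X,\mathbb{F})$ minimal'' by passing to $\omega_k(x)$ and noting that the $\mathbb{F}$-orbit of $x$ contains the dense $\mathbb{F}_k$-orbit of $\omega_k(x)$, and the converse by using surjectivity of $\omega_k$ to pick a preimage $y$ of the given point and observing that the two orbits differ by at most $k$ points. The step you isolate as the only genuine obstacle --- that deleting finitely many points from a dense set leaves a dense set --- is precisely the step the paper asserts without comment, and your suspicion about it is well founded. When $X$ has no isolated points that step is immediate, and there both your proof and the paper's are complete.

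However, the isolated-point case you flag and leave open cannot be closed, because the implication ``$(X,\mathbb{F})$ minimal $\Rightarrow$ $(X,\mathbb{F}_k)$ minimal'' is actually false there. Take $X=\{1,2,3\}$ with the discrete topology (a compact metric space), $f_1=\mathrm{id}$, $f_2=(1\,2)$, $f_3=(2\,3)$, $f_4=(1\,3\,2)$, and $f_n=(2\,3)$ for all $n\geq 5$; every $f_n$ is a permutation, hence a surjective homeomorphism, so all standing hypotheses of the section hold. The $\mathbb{F}_1$-orbit of $1$ is $2,3,2,3,\ldots$, which misses the non-empty open set $\{1\}$, so $(X,\mathbb{F}_1)$ is not minimal; on the other hand a direct check shows the $\mathbb{F}$-orbits of $1$, $2$ and $3$ are respectively $1,2,3,2,3,\ldots$, then $2,1,1,3,2,\ldots$, then $3,3,2,1,1,\ldots$, each equal to all of $X$, so $(X,\mathbb{F})$ is minimal. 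So your write-up proves the proposition under the additional hypothesis that $X$ is perfect --- which is also the true scope of the paper's own argument --- and the residual case you explicitly flagged is not a gap you failed to fill but a counterexample to the statement as given.
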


\begin{proof}

Let $(X,\mathbb{F})$ be minimal and let $x\in X$. As each $f_k$ is surjective, $\omega_{k}^{-1}(x)$ is non-empty. Further, as $(X,\mathbb{F})$ is minimal, orbit of any $y\in \omega_{k}^{-1}(x)$ (under $\mathbb{F}$) is dense in $X$. As orbit of $x$ (under $\mathbb{F}_k$) and orbit of $y$ (under $\mathbb{F}$) differ by finitely many points (atmost $k$), denseness of orbit of $y$ (under $\mathbb{F}$) implies denseness of orbit of $x$ (under $\mathbb{F}_k$) and hence $(X,\mathbb{F}_k)$ is minimal.

Conversely let $x\in X$ and $y=\omega_{k}(x)$. As $(X,\mathbb{F}_k)$ is minimal, orbit of $y$ (under $\mathbb{F}_k$) is dense in $X$. Further as orbit of $y$ (under $\mathbb{F}_k$) and orbit of $x$ (under $\mathbb{F}$) differ by finitely many points (atmost $k$), denseness of orbit of $y$ (under $\mathbb{F}_k$) ensures denseness of orbit of $x$ (under $\mathbb{F}$) and hence $(X,\mathbb{F})$ is minimal.
\end{proof}

\begin{Proposition}
$(X,\mathbb{F}_k)$ is equicontinuous $\Leftrightarrow$ $(X,\mathbb{F})$ is equicontinuous.
\end{Proposition}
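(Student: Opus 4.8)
The plan is to prove the two implications separately, using throughout the factorization $\omega_{k+n}=\omega^{k}_{k+n}\circ\omega_{k}$, valid for every $n\geq 1$, where $\omega^{k}_{k+n}=f_{k+n}\circ\cdots\circ f_{k+1}$ is exactly the $n$-th iterate of the system $(X,\mathbb{F}_k)$, and where $\omega_k=f_k\circ\cdots\circ f_1$ is a single continuous, hence uniformly continuous, surjection of the compact space $X$ onto itself. One may also first reduce to $k=1$ by induction, but handling general $k$ directly costs nothing.

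For the implication ``$(X,\mathbb{F}_k)$ equicontinuous $\Rightarrow$ $(X,\mathbb{F})$ equicontinuous'', fix $\epsilon>0$. The finitely many maps $\omega_1,\dots,\omega_k$ are each uniformly continuous, so there is $\delta_1>0$ with $d(\omega_n(x),\omega_n(y))<\epsilon$ whenever $d(x,y)<\delta_1$ and $n\leq k$. By equicontinuity of $(X,\mathbb{F}_k)$ there is $\eta>0$ such that $d(u,v)<\eta$ forces $d(\omega^{k}_{k+n}(u),\omega^{k}_{k+n}(v))<\epsilon$ for all $n\geq 1$; uniform continuity of $\omega_k$ then yields $\delta_2>0$ with $d(x,y)<\delta_2\Rightarrow d(\omega_k(x),\omega_k(y))<\eta$, and composing gives $d(\omega_{k+n}(x),\omega_{k+n}(y))<\epsilon$ for all $n\geq 1$ whenever $d(x,y)<\delta_2$. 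Then $\delta=\min\{\delta_1,\delta_2\}$ works, and this direction uses only continuity, not surjectivity, of the $f_i$.

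For the converse, suppose $(X,\mathbb{F})$ is equicontinuous. Since $X$ is compact, the family $\{\omega_n:n\in\mathbb{N}\}$ is a precompact subset of $C(X,X)$ in the uniform metric, by the Arzel\`a--Ascoli theorem. Because $\omega_k$ is surjective, the map $R\colon C(X,X)\to C(X,X)$ given by $R(\phi)=\phi\circ\omega_k$ satisfies $\sup_{x\in X}d(\phi(\omega_k x),\psi(\omega_k x))=\sup_{z\in X}d(\phi z,\psi z)$, i.e.\ $R$ is an isometric embedding; moreover $R(\omega^{k}_{k+n})=\omega^{k}_{k+n}\circ\omega_k=\omega_{k+n}$, so $R$ carries the family of iterates of $(X,\mathbb{F}_k)$ onto the subfamily $\{\omega_{k+n}:n\geq 1\}$ of the precompact family $\{\omega_n\}$. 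As $R^{-1}$ is an isometry on its domain, it sends the precompact set $\{\omega_{k+n}:n\geq 1\}$ to a precompact set, so $\{\omega^{k}_{k+n}:n\geq 1\}$ is precompact in $C(X,X)$, hence equicontinuous, which is exactly equicontinuity of $(X,\mathbb{F}_k)$.

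The main obstacle is this converse direction. The tempting elementary route --- given $u,v$ close in $X$, choose preimages $x\in\omega_k^{-1}(u)$ and $y\in\omega_k^{-1}(v)$ and feed $d(x,y)$ into the equicontinuity modulus of $\{\omega_n\}$ --- breaks down, since a continuous surjection of a compact space need not admit any uniformly close selection of preimages, so $d(x,y)$ cannot be controlled by $d(u,v)$. Passing instead to $C(X,X)$ and exploiting that precomposition by a surjection is an isometry circumvents exactly this difficulty; the remaining steps (Arzel\`a--Ascoli and the equivalence of precompactness with equicontinuity for a family of maps on a compact space) are routine.
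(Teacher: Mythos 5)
Your proof is correct, and while your forward implication is essentially the paper's (uniform continuity of the finitely many initial compositions, then feeding $\omega_k(x),\omega_k(y)$ into the equicontinuity modulus of $(X,\mathbb{F}_k)$), your converse takes a genuinely different route. The paper argues fiberwise and elementarily: for fixed $x$ it observes that $\bigcap_{m}\omega_k^{-1}(S(x,\frac{1}{m}))=\omega_k^{-1}(\{x\})$, so by compactness some $\omega_k^{-1}(S(x,\frac{1}{m}))$ lies inside the $\delta$-neighbourhood of the fibre $\omega_k^{-1}(\{x\})$; hence every preimage $u$ of a point $z$ near $x$ is $\delta$-close to \emph{some} preimage $y$ of $x$, and equicontinuity of $(X,\mathbb{F})$ applied to $(u,y)$ pushes down to $(X,\mathbb{F}_k)$ via $\omega_n=\omega_n^k\circ\omega_k$. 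This is exactly their workaround for the obstruction you correctly identify (no uniformly close selection of preimages); note their argument as written gives equicontinuity \emph{at each point} $x$ with $\delta=\frac{1}{m}$ depending on $x$, so strictly one still needs the standard passage from pointwise to uniform equicontinuity on a compact space. Your route instead globalizes: Arzel\`a--Ascoli turns equicontinuity of $\{\omega_n\}$ into precompactness in $C(X,X)$, and the key observation that precomposition with the surjection $\omega_k$ is an isometric embedding pulls precompactness back to $\{\omega^k_{k+n}\}$, whence equicontinuity again. Your version is shorter modulo Arzel\`a--Ascoli, delivers the uniform statement directly, and isolates precisely where surjectivity enters; the paper's version is self-contained and avoids function-space machinery. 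Both are valid proofs of the proposition.
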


\begin{proof}
Let $(X,\mathbb{F}_k)$ be equicontinuous and let $\epsilon>0$ be given. As $(X,\mathbb{F}_k)$ is equicontinuous, there exists $\rho>0$ ($\rho<\epsilon$) such that $d(x,y)<\rho$ implies $d(\omega^{k}_n(x),\omega^{k}_n(y))<\epsilon~~\forall~~ n\geq k+1$. Also as the set $\{f_1,f_2\circ f_1,\ldots,f_{k}\circ f_{k-1}\circ\ldots\circ f_1\}$ is finite, there exists $\eta>0$ such that $d(x,y)<\eta$ ensures $d(f_r\circ f_{r-1}\circ\ldots\circ f_1(x),f_r\circ f_{r-1}\circ\ldots\circ f_1(y))<\rho$ for $r\in\{1,2,\ldots,k\}$ or $d(\omega_r(x),\omega_r(y))<\rho$ for $r\in\{1,2,\ldots,k\}$. In particular, $d(x,y)<\eta$ gives $d(\omega_{k}(x),\omega_{k}(y))<\rho$ which further implies $d(\omega^{k}_n(\omega_{k}(x)),\omega^{k}_n(\omega_{k}(y)))<\epsilon~~\forall~~ n\geq k+1$ (by equicontinuity of $(X,\mathbb{F}_k)$) or $d(\omega_n(x),\omega_n(y))<\epsilon$ for all $n\in\mathbb{N}$ and hence $(X,\mathbb{F})$ is equicontinuous.

Conversely, let $(X,\mathbb{F})$ be equicontinuous and let $\epsilon>0$ be given. As $(X,\mathbb{F})$ is equicontinuous, there exists $\delta>0$ such that $d(x,y)<\delta$ ensures $d(\omega_n(x),\omega_n(y))<\epsilon~~\forall n\in\mathbb{N}$. Let $x\in X$ and let $\mathcal{N}_x=\{ S(x,\frac{1}{n}) :n\in\mathbb{N}\}$ be the neighborhood base at $x$. As $\bigcap\limits_{U\in\mathcal{N}_x} \omega_k^{-1}(U)=\omega_k^{-1}(\{x\})$, there exists $U\in\mathcal{N}_x$ such that $\omega_k^{-1}(U)\subset \bigcup\limits_{y\in\omega_k^{-1}(\{x\})} S(y,\delta)$ or there exits $m\in\mathbb{N}$ such that $\omega_k^{-1}(S(x,\frac{1}{m}))\subset \bigcup\limits_{y\in\omega_k^{-1}(\{x\})} S(y,\delta)$. Consequently, if  $d(x,z)<\frac{1}{m}$, for any $u\in \omega_k^{-1}(z)$,  $d(u,y)<\delta$ for some $y\in\omega_k^{-1}(\{x\})$ and hence $d(\omega_n(u),\omega_n(y))<\epsilon$ for all $n\in\mathbb{N}$. As $\omega_k(u)=z, \omega_k(y)=x$ and $\omega^k_n\circ\omega_k=\omega_n$, we obtain $d(\omega^k_n(x),\omega^k_n(z))<\epsilon$ for all $n\geq k+1$ and hence the truncated system is equicontinuous at $x$. As the proof holds for any $x\in X$, $(X,\mathbb{F}_k)$ is equicontinuous and hence equicontinuity is equivalent for the two systems.
\end{proof}

\begin{Remark}
The above proofs establish the equivalence of minimality and equicontinuity for the two systems. While equivalence of minimality for the two systems follow from the fact that denseness of a set is not altered by addition or deletion of finitely many points, equivalence of equicontinuity is established by working on each of the fibres of the inverse function (fibres of $f^{-1}$ are functions $g=f^{-1}|_M$ where $M$ is maximal subset of $X$ such that $f|_M$ is injective). The result is intuitive in nature and extends the fact that addition of finitely many maps cannot generate sensitivity in a non-sensitive system. We now turn our attention towards proximality for the two systems.
\end{Remark}

\begin{Proposition}\label{prox}
If $\mathbb{F}$ is commutative then, (x,y) is proximal for $(X,\mathbb{F}_k) \Rightarrow (x,y)$ is proximal for $(X,\mathbb{F})$. Further if each $f_i$ is bijective then, (x,y) is proximal for $(X,\mathbb{F}) \Rightarrow (x,y)$ is proximal for $(X,\mathbb{F}_k)$.
\end{Proposition}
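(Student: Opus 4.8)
The plan is to treat the two implications separately; in each, ``$(x,y)$ is proximal'' is a statement about a $\liminf$, and the entire content is that a (uniformly) continuous map transports a subsequence of $\delta$-close pairs to a subsequence of $\epsilon$-close pairs, so that the condition $\liminf_n d(\cdot,\cdot)=0$ is preserved. First I record that the $n$-th iterate of the truncated system $(X,\mathbb{F}_k)$ is precisely $\omega^k_{k+n}=f_{k+n}\circ\cdots\circ f_{k+1}$; thus $(x,y)$ is proximal for $(X,\mathbb{F}_k)$ means $\liminf_{n\to\infty}d(\omega^k_{k+n}(x),\omega^k_{k+n}(y))=0$, while $(x,y)$ proximal for $(X,\mathbb{F})$ means $\liminf_{n\to\infty}d(\omega_n(x),\omega_n(y))=0$.

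For the forward implication, assume $\mathbb{F}$ is commutative and $(x,y)$ is proximal for $(X,\mathbb{F}_k)$. Since the members of $\mathbb{F}$ pairwise commute, the composition $\omega_{k+n}=f_{k+n}\circ\cdots\circ f_1$ is independent of the order of its factors, so the block $\omega_k=f_k\circ\cdots\circ f_1$ may be pulled to the outside: $\omega_{k+n}=\omega_k\circ\omega^k_{k+n}$. As $X$ is compact, $\omega_k$ (a composition of continuous maps) is uniformly continuous, so for a given $\epsilon>0$ there is $\delta>0$ with $d(a,b)<\delta\Rightarrow d(\omega_k(a),\omega_k(b))<\epsilon$. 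Proximality of $(x,y)$ for $(X,\mathbb{F}_k)$ supplies infinitely many $n$ with $d(\omega^k_{k+n}(x),\omega^k_{k+n}(y))<\delta$, and for each such $n$ we get $d(\omega_{k+n}(x),\omega_{k+n}(y))=d(\omega_k(\omega^k_{k+n}(x)),\omega_k(\omega^k_{k+n}(y)))<\epsilon$. Since $\epsilon$ was arbitrary, $\liminf_{m\to\infty}d(\omega_m(x),\omega_m(y))=0$, i.e. $(x,y)$ is proximal for $(X,\mathbb{F})$.

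For the converse, assume in addition that each $f_i$ is bijective; then $\omega_k$ is a continuous bijection of the compact metric space $X$, hence a homeomorphism, so $\omega_k^{-1}$ is uniformly continuous. Beginning from $\liminf_{m}d(\omega_m(x),\omega_m(y))=0$ and using again $\omega_{k+n}=\omega_k\circ\omega^k_{k+n}$, every ``good time'' $m=k+n$ makes $d(\omega_k(\omega^k_{k+n}(x)),\omega_k(\omega^k_{k+n}(y)))$ small; feeding this through the uniform continuity of $\omega_k^{-1}$ (the same $\epsilon$--$\delta$ bookkeeping as above, run in reverse) yields $d(\omega^k_{k+n}(x),\omega^k_{k+n}(y))<\epsilon$ for infinitely many $n$, whence $\liminf_{n\to\infty}d(\omega^k_{k+n}(x),\omega^k_{k+n}(y))=0$ and $(x,y)$ is proximal for $(X,\mathbb{F}_k)$.

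The single delicate point is the commutativity step, which lets us relocate the finite block $\omega_k$ from the \emph{inner} side of the composition --- where it would only control the pair $(\omega_k(x),\omega_k(y))$ --- to the \emph{outer} side, so that uniform continuity of $\omega_k$ (respectively $\omega_k^{-1}$) bears directly on $(x,y)$; once that identity is in hand, the transfer of ``infinitely many close times'' is routine. As a check on the hypotheses, bijectivity genuinely cannot be dropped in the converse: if some $f_i$ identifies $x$ with $y$, then $(x,y)$ is trivially proximal for $(X,\mathbb{F})$, yet (taking all subsequent maps to be the identity and $x\neq y$) it fails to be proximal for $(X,\mathbb{F}_k)$.
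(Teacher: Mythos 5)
Your proof is correct and follows essentially the same route as the paper: commutativity is used to relocate the block $\omega_k$ so that $\omega_{k+n}=\omega_k\circ\omega^k_{k+n}$, and closeness is then transported through $\omega_k$ (respectively through its inverse, which exists since a continuous bijection of a compact metric space is a homeomorphism). The only difference is technical --- where you run a direct $\epsilon$--$\delta$ transfer via uniform continuity of $\omega_k$ and $\omega_k^{-1}$, the paper extracts convergent subsequences by sequential compactness and passes to limits, using injectivity of $\omega_k$ to identify the two limits in the converse direction; the substance is the same.
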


\begin{proof}
If $(x,y)$ is proximal for $(X,\mathbb{F}_k)$ then there exists a sequence $(n_r)$ of positive integers such that $\lim \limits_{r\rightarrow\infty} d(\omega^k_{n_r}(x),\omega^k_{n_r}(y))=0$. As $X$ is compact, there exists $z\in X$ and a subsequence $(n_{r_l})$ of $(n_r)$ such that $\lim \limits_{l\rightarrow\infty}\omega^k_{n_{r_l}}(x) = \lim \limits_{l\rightarrow\infty}\omega^k_{n_{r_l}}(y)=z$. Thus we get, $f_k\circ f_{k-1}\circ\ldots\circ f_1 (\lim \limits_{l\rightarrow \infty}\omega^k_{n_{r_l}}(x))= f_k\circ f_{k-1}\circ\ldots\circ f_1 (\lim \limits_{l\rightarrow \infty}\omega^k_{n_{r_l}}(z))=f_{k}\circ f_{k-1}\circ\ldots\circ f_1(z)$ or $\lim \limits_{l\rightarrow \infty} f_{k}\circ f_{k-1}\circ\ldots\circ f_1 (\omega^k_{n_{r_l}}(x))= \lim \limits_{l\rightarrow \infty} f_{k}\circ f_{k-1}\circ\ldots\circ f_1 (\omega^k_{n_{r_l}}(y))=f_{k}\circ f_{k-1}\circ\ldots\circ f_1(z)$ (as $f_{k}\circ f_{k-1}\circ\ldots\circ f_1$ is continuous). Consequently, $\lim\limits_{l\rightarrow \infty} \omega_{n_{r_l}}(x)= \lim\limits_{l\rightarrow \infty} \omega_{n_{r_l}}(y)= f_{k}\circ f_{k-1}\circ\ldots\circ f_1(z)$ (as $\mathbb{F}$ is commutative) and hence $(x,y)$ is proximal for $(X,\mathbb{F})$.

Conversely, let $(x,y)$ be proximal for $(X,\mathbb{F})$. Thus, there exists sequence $(n_r)$ of natural numbers such that $\lim \limits_{r\rightarrow\infty} d(\omega_{n_r}(x),\omega_{n_r}(y))=0$. Consequently, there exists a subsequence $(n_{r_l})$ of $(n_r)$ and $z\in X$ such that $\lim \limits_{l\rightarrow\infty}\omega_{n_{r_l}}(x) = \lim \limits_{l\rightarrow\infty}\omega_{n_{r_l}}(y)=z$. As $\omega_{n_{r_l}}=\omega^k_{n_{r_l}}\circ (f_{k}\circ f_{k-1}\circ\ldots\circ f_1)$ and the family $\mathbb{F}$ is commutative, we obtain $\lim \limits_{l\rightarrow \infty} f_{k}\circ f_{k-1}\circ\ldots\circ f_1 (\omega^k_{n_{r_l}}(x))= \lim \limits_{l\rightarrow \infty} f_{k}\circ f_{k-1}\circ\ldots\circ f_1 (\omega^k_{n_{r_l}}(y))=z$ or $f_{k}\circ f_{k-1}\circ\ldots\circ f_1 (\lim \limits_{l\rightarrow \infty}\omega^k_{n_{r_l}}(x))= f_{k}\circ f_{k-1}\circ\ldots\circ f_1 (\lim \limits_{l\rightarrow \infty}\omega^k_{n_{r_l}}(y))=z$ (as $f_{k}\circ f_{k-1}\circ\ldots\circ f_1$ is continuous). As each $f_i$ is bijective, $f_{k}\circ f_{k-1}\circ\ldots\circ f_1$ is bijective and thus we obtain $\lim \limits_{l\rightarrow \infty}\omega^k_{n_{r_l}}(x)= \lim \limits_{l\rightarrow \infty}\omega^k_{n_{r_l}}(y)$ or $(x,y)$ is proximal for $(X,\mathbb{F}_k)$.
\end{proof}

\begin{Remark}
The above proof establishes the equivalence of proximality for the two systems when the family $\mathbb{F}$ is commutative and each $f_k$ is a bijection. While proximality is preserved from $(X,\mathbb{F}_k)$ to $(X,\mathbb{F})$ when the family $\mathbb{F}$ is commutative, the converse is proved under additional assumption of bijectivity of the the maps $f_k$. However, the proof uses only injectivity of the maps $f_k$ and hence the result is true when $\mathbb{F}$ is a commutative family of injective maps. Further, both commutativity and bijectivity (injectivity) are necessary conditions to establish the result and the result does not hold good when either of the conditions imposed is dropped. We now give examples in support of our claim.
\end{Remark}

\begin{ex}\label{prox1}
Let $I$ be the unit interval and let $f:I\rightarrow I$ be piecewise continuous linear map such that $f(0)=0, f(\frac{1}{3})=1, f(\frac{2}{3})=0$ and $f(1)=\frac{2}{3}$. Let $g:I\rightarrow I$ be the defined as

$g(x) = \left\{%
\begin{array}{ll}
            2x  & \text{for x} \in [0, \frac{1}{2}] \\
           2-2x & \text{for x} \in [\frac{1}{2}, 1] \\
\end{array} \right.$

Let $(X,\mathbb{F})$ be the non-autonomous system generated by $\mathbb{F}=\{f,g,g,\ldots\}$. It may be noted that $f$ and $g$ do not commute and hence non-autonomous system generated is non-commutative in nature. As $g(0)=g(1)$, $\{0,1\}$ is a proximal set for $(X,\mathbb{F}_k)$ for any $k\in\mathbb{N}$. However, as $f(0)=0$ and $f(1)=\frac{2}{3}$ are fixed for $g$, the pair is not proximal for $(X,\mathbb{F})$. Thus, commutativity is an essential condition to preserve proximality of a pair (from $(X,\mathbb{F}_k)$ to $(X,\mathbb{F})$).

Further, let $h:I\rightarrow I$ be continuous such that $h(0)=0, h(\frac{2}{3})=\frac{1}{4}$ and $h(1)=1$ and let $(X,\mathbb{F})$ be the non-autonomous system generated by $\mathbb{F}=\{h,g,g,\ldots\}$. It may be noted that the system generated is non-commutative in nature. Further, as the set  $\{0,\frac{2}{3}\}$ is proximal for $(X,\mathbb{F})$ but not for $(X,\mathbb{F}_k)$, the converse does not hold in absence of commutativity.
\end{ex}

%
%
%
%
%

\begin{ex}\label{prox2}
Let $S^1$ be the unit circle and let $f:S^1\rightarrow S^1$ be defined as $f(\theta)=\theta+\pi$. Let $g:S^1\rightarrow S^1$ be defined as

$g(\theta) = \left\{%
\begin{array}{ll}
            \theta  & \text{for}~~\theta \in [0, \pi] \\
           \frac{\theta^2}{\pi}-2\theta+2\pi & \text{for~~} \theta \in [\pi, 2\pi] \\
\end{array} \right.$

Let $(X,\mathbb{F})$ be the non-autonomous system generated by $\mathbb{F}=\{f,g,g,\ldots\}$. It may be noted that both $f$ and $g$ are bijective and hence the non-autonomous system generated is bijective (but non-commutative) in nature. Further, as $f([0,\pi])=[\pi,2\pi]$ and $\pi$ is fixed point (attracting from the right) for $g$, any two points in $[0,\pi]$ are proximal for $(X,\mathbb{F})$. However, as $g$ fixes every point in $[0,\pi]$, the truncated system $(X,\mathbb{F}_k)$ ($k\geq 1$) does not exhibit any proximal pair in $[0,\pi]$.
\end{ex}

\begin{ex}\label{prox3}
Let $I$ be the unit interval and let $f,g:I\rightarrow I$ be defined as

$f(x) = \left\{%
\begin{array}{ll}
            x  & \text{for x} \in [0, \frac{1}{2}] \\
            \frac{4}{3}x-\frac{1}{6} & \text{for x} \in [\frac{1}{2},\frac{7}{8}] \\
           1 & \text{for x} \in [\frac{7}{8}, 1] \\
\end{array} \right.$

$g(x) = \left\{%
\begin{array}{ll}
            -2x+\frac{1}{2}  & \text{for x} \in [0, \frac{1}{4}] \\
            2x-\frac{1}{2} & \text{for x} \in [\frac{1}{4},\frac{1}{2}]\\
           x & \text{for x} \in [\frac{1}{2}, 1] \\
\end{array} \right.$

Let $(X,\mathbb{F})$ be the non-autonomous system generated by $\mathbb{F}=\{f,g,g,\ldots\}$. Then, $f$ and $g$ commute and hence non-autonomous system generated is commutative (but not bijective) in nature. As $f([\frac{7}{8},1])=1$ and $g(x)=x$ for any $x\in [\frac{7}{8},1]$, any pair $(x,y)$ ($x,y \in[\frac{7}{8},1]$) is proximal for $(X,\mathbb{F})$ but fails to be proximal for any truncated system.
\end{ex}

\begin{Remark}
The above examples validate the necessity of the conditions imposed in proposition \ref{prox}. While Example \ref{prox1} establishes the necessity of the commutativity condition for the proposition to hold good, Examples \ref{prox2} and \ref{prox3} prove that commutativity or injectivity alone cannot preserve the proximal pairs in the converse direction. Consequently, both commuativity and injectivity of the maps $f_k$ are necessary for the converse to hold good and hence cannot be dropped.
\end{Remark}

\begin{Proposition}\label{tran}
$(X,\mathbb{F})$ is transitive $\Rightarrow$ $(X,\mathbb{F}_k)$ is transitive. If the family $\mathbb{F}$ is feeble open then $(X,\mathbb{F}_k)$ is transitive $\Rightarrow$ $(X,\mathbb{F})$ is transitive.
\end{Proposition}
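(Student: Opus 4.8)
The plan rests on the factorisation $\omega_n=\omega^k_n\circ\omega_k$, valid for $n\ge k+1$, on surjectivity of $\omega_k$ (for the forward implication), and on the fact that a composition of feeble open maps is again feeble open (for the converse).

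For the forward implication, let $U,V$ be non-empty open subsets of $X$. Since every $f_i$ is surjective, so is $\omega_k$, hence $U'=\omega_k^{-1}(U)$ is non-empty open and $\omega_k(U')=U$. Transitivity of $(X,\mathbb F)$ gives an index $n$ with $\omega_n(U')\cap V\neq\phi$, and I claim such an $n$ can be chosen with $n\ge k+1$. Indeed, were $\omega_n(U')\cap V=\phi$ for every $n\ge k+1$, the same would hold with $U'$, $V$ replaced by arbitrary non-empty open $A'\subseteq U'$, $V'\subseteq V$; transitivity would then force $\omega_j(A')\cap V'\neq\phi$ for some $j\le k$, for every such $A',V'$ --- impossible, since by uniform continuity one can shrink $A'$ so that $\omega_1(A'),\dots,\omega_k(A')$ all have tiny diameter and then choose a small $V'\subseteq V$ disjoint from all of them. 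For an $n\ge k+1$ as above, $\omega_n(U')=\omega^k_n(\omega_k(U'))=\omega^k_n(U)$, so $\omega^k_n(U)\cap V\neq\phi$ and $(X,\mathbb F_k)$ is transitive.

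For the converse, assume $\mathbb F$ is feeble open. First, feeble openness is inherited by compositions: if $U$ is non-empty open, then $f_1(U)$ has non-empty interior $O_1$, then $f_2(O_1)\subseteq(f_2\circ f_1)(U)$ has non-empty interior, and inductively $\omega_k$ is feeble open, i.e. $int(\omega_k(U))\neq\phi$ for every non-empty open $U$. Now, given non-empty open $U,V$, set $U^*=int(\omega_k(U))$, a non-empty open set. Transitivity of $(X,\mathbb F_k)$ provides $n\ge k+1$ with $\omega^k_n(U^*)\cap V\neq\phi$; since $U^*\subseteq\omega_k(U)$, we get $\omega^k_n(U^*)\subseteq\omega^k_n(\omega_k(U))=\omega_n(U)$, hence $\omega_n(U)\cap V\neq\phi$. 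Thus $(X,\mathbb F)$ is transitive.

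The substantive point is the converse, and the only place the hypothesis is used there is to guarantee $int(\omega_k(U))\neq\phi$: for a general continuous $f_k$ the image $\omega_k(U)$ can be nowhere dense, so there is no open set to push through the transitive truncated system and the implication fails (as the paper's later examples confirm). Feeble openness is present precisely to supply the open set $U^*$; the forward implication, by contrast, needs only surjectivity together with the elementary ``infinitely many transfer times'' observation above.
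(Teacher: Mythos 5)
Your proposal follows the paper's route exactly: for the forward implication you pull $U$ back to $U'=\omega_k^{-1}(U)$ (non-empty by surjectivity of the $f_i$), locate a transfer time $n\ge k+1$, and use $\omega_n(U')=\omega^k_n(\omega_k(U'))=\omega^k_n(U)$; for the converse you push $U$ forward to $U^*=int(\omega_k(U))$, non-empty because a composition of feeble open maps is feeble open, and invoke transitivity of the truncated system. The converse half coincides with the paper's argument and is correct; your explicit induction showing $int(\omega_k(U))\neq\phi$ is a welcome detail the paper leaves implicit.

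The one place you go beyond the paper is in the forward direction, where the paper simply asserts that transitivity makes the set $\{r:\omega_r(U')\cap V\neq\phi\}$ infinite, while you attempt a proof. Your shrinking argument is sound precisely when $X$ has no isolated points: fixing a centre $a$ of $A'$, the sets $\omega_1(A'),\ldots,\omega_k(A')$ lie in $k$ balls of radius $\eta$ about the fixed points $\omega_1(a),\ldots,\omega_k(a)$, and an infinite open set $V$ cannot lie inside $\bigcup_{j\le k}\overline{B(\omega_j(a),\eta)}$ for every $\eta>0$, so the required $V'$ exists. But the step ``choose a small $V'\subseteq V$ disjoint from all of them'' silently uses exactly this; if $V$ is a finite open set no such $V'$ need exist, and then the whole claim collapses. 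This is not a repairable omission: on $X=\{a,b\}$ with $f_1=id$, $f_2$ the transposition and $f_n=id$ for $n\ge 3$ (all bijective, hence surjective and feeble open), $(X,\mathbb{F})$ is transitive while $(X,\mathbb{F}_1)$ is not, so the forward implication itself fails on spaces with isolated points. You are in good company --- the paper's own assertion has the same defect --- but you should state the hypothesis (no isolated points, or some equivalent) at the point where your argument needs it.
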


\begin{proof}
Let $(X,\mathbb{F})$ be transitive and let $U,V$ be any pair of non-empty open subsets in $X$. As $(X,\mathbb{F})$ is transitive, for the pair $U'=\omega_{k}^{-1}(U),V$ of non-empty open sets in $X$, there exists $r\in\mathbb{N}$ such that $\omega_r(U')\cap V\neq\phi$. Also, transitivity of $(X,\mathbb{F})$ enures that the set $\{r\in\mathbb{N}: \omega_r(U')\cap V\neq\phi\}$ is infinite. Consequently there exists $r>k$ such that $\omega_r(U')\cap V\neq\phi$ or $\omega^k_r(U)\cap V\neq\phi$ and hence $(X,\mathbb{F}_k)$ is transitive.

Let $(X,\mathbb{F}_k)$ be transitive and let $U,V$ be any pair of non-empty open subsets in $X$. As the family $\mathbb{F}$ is feeble open $\omega_{k}(U)$ has a non-empty interior. Thus, for open sets $U'=int(\omega_{k}(U)), V$ in $X$, there exists $r\in\mathbb{N}$ such that $\omega^k_r(U')\cap V\neq\phi$. Consequently, $\omega^k_r(\omega_{k}(U))\cap V\neq\phi$ or $\omega_r(U)\cap V\neq\phi$ and hence $(X,\mathbb{F})$ is transitive..
\end{proof}

\begin{Remark}
The above proof establishes the equivalence of transitivity for the two systems $(X,\mathbb{F})$ and $(X,\mathbb{F}_k)$. Though the property is preserved from $(X,\mathbb{F})$ to $(X,\mathbb{F}_k)$ unconditionally, the proof of the converse holds good when that family $\mathbb{F}$ is feeble open. As absence of feeble openness destroys the topological structure of an open set over iterations, feeble openness is a necessary condition for the converse to hold good. Further, as the proof does not use the structure of open sets explicitly, $U_1,U_2$ interact with $V_1,V_2$ for $(X,\mathbb{F})$ (or $(X,\mathbb{F}_k$)) at $r$-th iterate then $U_1,U_2$ and $V_1,V_2$ interact at $r-k$-th (or $r+k$-th) iterate for $(X,\mathbb{F}_k)$ (or $(X,\mathbb{F})$) and hence weakly mixing is equivalent for the two systems under identical conditions. Further, as the set of times of interaction between open sets $U$ and $V$ for the two systems $(X,\mathbb{F})$ and $(X,\mathbb{F}_k)$ are translate of each other (by constant $k$), the similar proof gives equivalence of topological mixing under identical conditions. We now establish our claims below.
\end{Remark}

\begin{Cor}
$(X,\mathbb{F})$ is weakly mixing (topological mixing) $\Rightarrow$ $(X,\mathbb{F}_k)$ is weakly mixing (topological mixing). If the family $\mathbb{F}$ is feeble open then $(X,\mathbb{F}_k)$ is weakly mixing (topological mixing) $\Rightarrow$ $(X,\mathbb{F})$ is weakly mixing (topological mixing).
\end{Cor}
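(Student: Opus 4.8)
The plan is to deduce the weakly mixing half of the corollary from Proposition \ref{tran} applied to the product system, and to obtain the topological mixing half by simply re-running the argument of Proposition \ref{tran} while tracking a cofinite set of meeting times instead of merely an infinite one. In both cases the key identity is that, since each $f_i$ is continuous and surjective, $\omega_k^{-1}(U)$ is a non-empty open set with $\omega_k(\omega_k^{-1}(U))=U$, so that $\omega_n(\omega_k^{-1}(U))=\omega^k_n\circ\omega_k(\omega_k^{-1}(U))=\omega^k_n(U)$ for every $n\geq k+1$; dually, when $\mathbb{F}$ is feeble open, $int(\omega_k(U))$ is a non-empty open set and $\omega^k_n(int(\omega_k(U)))\subseteq\omega^k_n(\omega_k(U))=\omega_n(U)$.

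For the weakly mixing case, recall that $(X,\mathbb{F})$ is weakly mixing exactly when the product system $(X\times X,\mathbb{F}\times\mathbb{F})$ is transitive, and likewise $(X,\mathbb{F}_k)$ is weakly mixing exactly when $(X\times X,(\mathbb{F}\times\mathbb{F})_k)$ is transitive; here $(\mathbb{F}\times\mathbb{F})_k=\mathbb{F}_k\times\mathbb{F}_k$ because truncation is carried out coordinatewise, and the product of surjections is a surjection, so the standing hypothesis of this section is met by the product family. The forward implication is then immediate from Proposition \ref{tran}. For the converse I would first verify that feeble openness passes to products: given a non-empty open $W\subseteq X\times X$ pick a basic box $A\times B\subseteq W$ with $A,B$ non-empty open; then for any $f,g\in\mathbb{F}$ we have $(f\times g)(W)\supseteq f(A)\times g(B)\supseteq int(f(A))\times int(g(B))$, and the last set is non-empty and open, so $int((f\times g)(W))\neq\phi$. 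Hence $\mathbb{F}\times\mathbb{F}$ is feeble open whenever $\mathbb{F}$ is, and the converse half follows by applying the second assertion of Proposition \ref{tran} to $(X\times X,\mathbb{F}\times\mathbb{F})$.

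For the topological mixing case I would argue directly. Suppose $(X,\mathbb{F})$ is topologically mixing and let $U,V$ be non-empty open in $X$; put $U'=\omega_k^{-1}(U)$. By topological mixing there is $K$ with $\omega_n(U')\cap V\neq\phi$ for all $n\geq K$, hence $\omega^k_n(U)\cap V\neq\phi$ for all $n\geq\max\{K,k+1\}$, so $(X,\mathbb{F}_k)$ is topologically mixing. Conversely, assume $\mathbb{F}$ is feeble open and $(X,\mathbb{F}_k)$ is topologically mixing; put $W=int(\omega_k(U))$, which is non-empty open. There is $K$ with $\omega^k_n(W)\cap V\neq\phi$ for all $n\geq\max\{K,k+1\}$, and since $\omega^k_n(W)\subseteq\omega_n(U)$ we get $\omega_n(U)\cap V\neq\phi$ for all such $n$, so $(X,\mathbb{F})$ is topologically mixing.

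The mathematical content here is light; the substance is bookkeeping. The one genuinely new point is that feeble openness is inherited by $\mathbb{F}\times\mathbb{F}$, which is where I expect the only (mild) subtlety to sit, since one must descend to a basic open box before the interiors can be controlled coordinatewise. The other thing to state carefully is the quantifier management in the topological mixing case: the meeting constant $K$ produced by one system must be replaced by $\max\{K,k+1\}$ for the other so as to respect the index range on which the truncated cocycle $\omega^k_n$ is defined.
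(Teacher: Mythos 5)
Your proof is correct, and in substance it travels the same road as the paper: the paper's own justification is a one-line appeal to Proposition \ref{tran} together with the preceding remark, which observes that the sets of interaction times for corresponding pairs of open sets in the two systems are translates of each other by $k$, so that the transitivity argument goes through verbatim for several pairs simultaneously (weak mixing) and for cofinite time sets (topological mixing). Your topological mixing half is exactly that argument, written out with the quantifiers managed explicitly. Your weak mixing half is packaged differently: instead of re-running the proof of Proposition \ref{tran} with two pairs of open sets, you invoke the characterisation of weak mixing as transitivity of $\mathbb{F}\times\mathbb{F}$ (which the paper adopts as an equivalent definition) and apply Proposition \ref{tran} as a black box to the product system. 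This buys you a genuinely modular proof at the cost of two small lemmas the paper leaves implicit, both of which you correctly identify and verify: that truncation commutes with taking products, so $(\mathbb{F}\times\mathbb{F})_k=\mathbb{F}_k\times\mathbb{F}_k$ with the product maps still surjective, and that feeble openness is inherited by $\mathbb{F}\times\mathbb{F}$, which indeed requires descending to a basic open box before controlling interiors coordinatewise. The paper's route avoids these verifications but relies on the reader's willingness to accept that the argument for one pair of open sets ``works the same way'' for two pairs at a common time $r$; your route makes that uniformity a formal consequence of the product construction.
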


\begin{proof}
The proof follows from discussions in Remark \ref{sens} and Proposition \ref{tran}.
\end{proof}

\begin{ex}\label{sens}
Let $I$ be the unit interval and let $f,g:I\rightarrow I$ be defined as

$f(x) = \left\{%
\begin{array}{ll}
            0  & \text{for x} \in [0, \frac{1}{2}] \\
           2x-1 & \text{for x} \in [\frac{1}{2}, 1] \\
\end{array} \right.$

$g(x) = \left\{%
\begin{array}{ll}
            2x  & \text{for x} \in [0, \frac{1}{2}] \\
           2-2x & \text{for x} \in [\frac{1}{2}, 1]
\end{array} \right.$ \\

Let $(X,\mathbb{F})$ be the non-autonomous system generated by $\mathbb{F}=\{f,g,g,\ldots\}$. For any $k\in\mathbb{N}$, $(X,\mathbb{F}_k)$ is the autonomous system generated by tent map and hence exhibits all forms of mixing and sensitivities. However for any open set $U$,  $U\subset[0,\frac{1}{2}]$, $\omega_r(U)=\{0\}$ for any $r\in\mathbb{N}$. Thus the non-autonomous system does not exhibit any form of mixing or sensitivity and hence feeble openness is necessary to preserve any form of mixing or sensitivity (from $(X,\mathbb{F}_k)$ to $(X,\mathbb{F})$). We now establish that feeble openness is indeed sufficient to preserve sensitivity from $(X,\mathbb{F}_k)$ to $(X,\mathbb{F})$.
\end{ex}

\begin{Proposition}
$(X,\mathbb{F})$ is sensitive $\Rightarrow$ $(X,\mathbb{F}_k)$ is sensitive. If the family $\mathbb{F}$ is feeble open then
$(X,\mathbb{F}_k)$ is sensitive $\Rightarrow$ $(X,\mathbb{F})$ is sensitive.
\end{Proposition}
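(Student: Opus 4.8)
The plan is to prove both implications with the \emph{same} sensitivity constant $\delta$, using the ``pull a non-empty open set back through $\omega_k$, then push it forward'' device from the proof of Proposition~\ref{tran}; only the forward implication requires an extra, routine, observation. That observation is the following strengthening of sensitivity: if $(X,\mathbb{F})$ is sensitive with constant $\delta$, then for every non-empty open $U\subseteq X$ and every $N\in\mathbb{N}$ there is some $n\geq N$ with $diam(\omega_n(U))>\delta$. Indeed $\omega_1,\dots,\omega_{N-1}$ are finitely many continuous self-maps of the compact space $X$, hence uniformly continuous, so one can choose a non-empty open $U'\subseteq U$ with $diam(\omega_j(U'))<\delta$ for all $j<N$; a sensitivity witness $n$ for $U'$ is then forced to satisfy $n\geq N$, and $\omega_n(U')\subseteq\omega_n(U)$ finishes it. (Equivalently: sensitivity may be tested against non-empty open sets rather than points, and produces arbitrarily large witnessing times.)

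For $(X,\mathbb{F})\Rightarrow(X,\mathbb{F}_k)$: given a non-empty open $V\subseteq X$, put $U=\omega_k^{-1}(V)$. Surjectivity of each $f_i$ makes $\omega_k$ surjective, so $U$ is non-empty (and open) and $\omega_k(U)=V$. By the strengthening above choose $n>k$ with $diam(\omega_n(U))>\delta$; since $n>k$ we have $\omega_n=\omega^k_n\circ\omega_k$, hence $\omega_n(U)=\omega^k_n(V)$ and $diam(\omega^k_n(V))>\delta$. Thus $(X,\mathbb{F}_k)$ is sensitive with constant $\delta$, and this half uses only surjectivity.

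For the converse, assume in addition that $\mathbb{F}$ is feeble open and that $(X,\mathbb{F}_k)$ is sensitive with constant $\delta$; let $U\subseteq X$ be non-empty open. Set $U_0=U$ and $U_j=int(f_j(U_{j-1}))$ for $j=1,\dots,k$. Feeble openness gives $U_j\neq\phi$, and an easy induction gives $U_j\subseteq\omega_j(U)$; so $W:=U_k$ is a non-empty open subset of $\omega_k(U)$. By sensitivity of $(X,\mathbb{F}_k)$ there is $n>k$ with $diam(\omega^k_n(W))>\delta$; since $W\subseteq\omega_k(U)$ and $\omega_n=\omega^k_n\circ\omega_k$, we get $\omega^k_n(W)\subseteq\omega_n(U)$, whence $diam(\omega_n(U))>\delta$ and $(X,\mathbb{F})$ is sensitive.

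The only genuine obstacle is the forward direction's need for a \emph{large} witnessing time $n>k$, so that $\omega_n$ factors as $\omega^k_n\circ\omega_k$: a small-time witness for $U=\omega_k^{-1}(V)$ only tells us that some $\omega^k_n$-preimage of $V$ is large, which says nothing about $V$ being stretched forward, and this is precisely why the uniform-continuity step is inserted. Everything else is the bookkeeping of Proposition~\ref{tran}, with feeble openness again preventing $\omega_k(U)$ from degenerating; Example~\ref{sens} shows feeble openness cannot be dropped in the converse. Finally, replacing ``$diam(\omega_n(\cdot))>\delta$ for some $n$'' by ``$diam(\omega_n(\cdot))>\delta$ for all $n\geq K$'' throughout yields, by the same argument, the corresponding equivalence for cofinite sensitivity.
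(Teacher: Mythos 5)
Your proof is correct and follows essentially the same route as the paper: pull the open set back through $\omega_k$ for the forward implication, and push it forward and take $int(\omega_k(U))$ (non-empty by feeble openness) for the converse, preserving the constant $\delta$ in both directions. The only difference is that you explicitly justify two steps the paper merely asserts, namely that a sensitive system admits arbitrarily large expansion times (your uniform-continuity shrinking argument) and that $int(\omega_k(U))\neq\phi$ follows by induction from feeble openness of each $f_j$; both fills are welcome but do not change the argument.
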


\begin{proof}
Let $(X,\mathbb{F})$ be sensitive with $\delta$ as constant of sensitivity. For any open set $U$, continuity of each $f_i$ implies $U'=\omega_{k}^{-1}(U)$ is open and hence there exists $r\in\mathbb{N}$ such that $diam(\omega_r(U'))>\delta$. As the set of times of expansion is infinite for a sensitive system, there exists $m>k$ such that $diam(\omega_m(U'))>\delta$ which implies $diam(\omega^k_m(U))>\delta$ and hence $(X,\mathbb{F}_k)$ is sensitive.

Conversely let $(X,\mathbb{F}_k)$ be sensitive with $\delta$ as constant of sensitivity and let $U$ be a non-empty open set in $X$. As the family $\mathbb{F}$ is feeble open, $U'= int(\omega_{k}(U))$ is non-empty and hence sensitivity of $(X,\mathbb{F}_k)$ yields $m\in\mathbb{N}$ such that $diam(\omega^k_m(U'))>\delta$. Consequently, $diam(\omega^k_m(\omega_{k}(U)))>\delta$ or $diam(\omega_m(U))>\delta$ and hence $(X,\mathbb{F})$ is sensitive.
\end{proof}

\begin{Remark}
The above proof establishes equivalence of sensitivity for the two systems $(X,\mathbb{F})$ and $(X,\mathbb{F}_k)$ under feeble openness of the family $\mathbb{F}$.  Once again, while sensitivity of $(X,\mathbb{F})$ implies sensitivity of $(X,\mathbb{F}_k)$ unconditionally, the converse is true when the family $\mathbb{F}$ is feeble open. As noted in Example \ref{sens}, feeble openness is necessary for the converse to hold good and hence cannot be dropped. Further, it may be noted that if one of the systems is sensitive with sensitivity constant $\delta$, then the proof establishes the sensitivity of the other system with same constant of sensitivity and hence the two systems are sensitive with same sensitivity constant. Finally, as the times of expansion (of an open set $U$) for the two systems are translate (by constant $k$) of each other, a similar proof establishes the equivalence of syndetic (cofinite) sensitivity for the two systems. Hence we get the following corollary.
\end{Remark}

\begin{Cor}
$(X,\mathbb{F})$ is syndetically (cofinitely) sensitive $\Rightarrow$ $(X,\mathbb{F}_k)$ is syndetically (confinitely) sensitive. If the family $\mathbb{F}$ is feeble open then $(X,\mathbb{F}_k)$ is syndetically (cofinitely) sensitive $\Rightarrow$ $(X,\mathbb{F})$ is syndetically (cofinitely) sensitive.
\end{Cor}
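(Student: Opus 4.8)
The plan is to adapt, almost verbatim, the proof of the preceding proposition on sensitivity, the only new ingredient being that one must track the entire \emph{set} of expansion times of an open set rather than a single expansion time, and then use the elementary fact that syndeticity and cofiniteness of a subset of $\mathbb{N}$ survive a shift by the fixed constant $k$ together with the insertion or deletion of finitely many elements. Recall that $\mathbb{F}_k=\{f_{k+1},f_{k+2},\dots\}$, so the $n$-step map of $(X,\mathbb{F}_k)$ is $\omega^k_{k+n}=f_{k+n}\circ\cdots\circ f_{k+1}$, and $\omega_r=\omega^k_r\circ\omega_k$ for $r\ge k$.

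First I would prove the unconditional forward implication. Assume $(X,\mathbb{F})$ is syndetically (resp.\ cofinitely) sensitive with constant $\delta$. Given a non-empty open $U\subseteq X$, put $U'=\omega_k^{-1}(U)$; since each $f_i$ is surjective and continuous, $U'$ is non-empty and open, and $\omega_k(U')=U$. For every $r>k$ one then has $\omega_r(U')=\omega^k_r(\omega_k(U'))=\omega^k_r(U)=\omega^k_{k+(r-k)}(U)$, so $diam(\omega_r(U'))=diam(\omega^k_{k+(r-k)}(U))$. Hence the expansion-time set $A=\{r\in\mathbb{N}:diam(\omega_r(U'))>\delta\}$ of $U'$ in $(X,\mathbb{F})$ and the expansion-time set $B=\{n\in\mathbb{N}:diam(\omega^k_{k+n}(U))>\delta\}$ of $U$ in $(X,\mathbb{F}_k)$ satisfy $B=\{r-k:r\in A,\ r>k\}$. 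Since $A$ is syndetic (resp.\ cofinite), deleting the finitely many indices $\le k$ and shifting by $-k$ leaves a syndetic (resp.\ cofinite) set, so $B$ is syndetic (resp.\ cofinite). As $U$ was arbitrary and $\delta$ is independent of $U$, $(X,\mathbb{F}_k)$ is syndetically (resp.\ cofinitely) sensitive with the same constant.

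Next I would handle the converse under feeble openness. Assume $\mathbb{F}$ is feeble open and $(X,\mathbb{F}_k)$ is syndetically (resp.\ cofinitely) sensitive with constant $\delta$. Given a non-empty open $U$, feeble openness of $\mathbb{F}$ makes $U'=int(\omega_k(U))$ non-empty and open, so its expansion-time set $S$ in $(X,\mathbb{F}_k)$ is syndetic (resp.\ cofinite). For each $n\in S$, since $U'\subseteq\omega_k(U)$ and $\mathrm{diam}$ is monotone under images, $diam(\omega_{k+n}(U))=diam(\omega^k_{k+n}(\omega_k(U)))\ge diam(\omega^k_{k+n}(U'))>\delta$. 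Thus the expansion-time set of $U$ in $(X,\mathbb{F})$ contains $S+k$, which is syndetic (a superset of a syndetic set is syndetic), and in the cofinite case $S\supseteq[K,\infty)$ forces the set to contain $[K+k,\infty)$. Hence $(X,\mathbb{F})$ is syndetically (resp.\ cofinitely) sensitive with the same $\delta$.

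I do not expect a serious obstacle: the geometric core is identical to the sensitivity proposition, and the genuinely new step is only the bookkeeping lemma that syndeticity and cofiniteness are preserved under a shift by $k$ and under finite modification, which is routine (bounded gaps remain bounded; finite complements remain finite). The one place to stay careful is the surjectivity/feeble-openness argument guaranteeing that $U'$ is non-empty in each direction — exactly as in the proof of the sensitivity proposition — and the translation between the index conventions $\omega_r$ and $\omega^k_{k+n}$.
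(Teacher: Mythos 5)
Your proposal is correct and follows essentially the same route as the paper, which derives this corollary from the sensitivity proposition by observing that the expansion-time sets of an open set under $(X,\mathbb{F})$ and $(X,\mathbb{F}_k)$ are translates of each other by $k$ (up to finitely many initial indices), so syndeticity and cofiniteness are preserved. You have simply written out in full the bookkeeping that the paper leaves implicit in its remark preceding the corollary.
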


\begin{Remark}
The proofs above establish that for a feeble open family $\mathbb{F}$, $(X,\mathbb{F})$ exhibits any form of mixing (sensitivity) if and only if $(X,\mathbb{F}_k)$ also exhibits the same. It may be noted that if $(X,\mathbb{G})$ is a finite rearrangement of $(X,\mathbb{F})$ then there exists $k\in\mathbb{N}$ such that $\mathbb{G}_k=\mathbb{F}_k$. Consequently, for a feeble open family $\mathbb{F}$, as $(X,\mathbb{F})$ and $(X,\mathbb{F}_k)$ (and similarly $(X,\mathbb{G})$ and $(X,\mathbb{G}_k)$) exhibit identical notions of mixing (sensitivity), $(X,\mathbb{F})$ exhibits any form of mixing (sensitivity) if and only if $(X,\mathbb{G})$ exhibits identical form of mixing (sensitivity) and hence various notions of mixing (sensitivity) are preserved under finite rearrangements. Further, it may be noted that as minimality and equicontinuity are equivalent for two systems $(X,\mathbb{F})$ and $(X,\mathbb{F}_k)$ unconditionally, the notions of minimality and equicontinuity are preserved under finite rearrangements. Hence we obtain the following corollaries.
\end{Remark}

\begin{Cor}
Let $(X,\mathbb{F})$ be a non-autonomous dynamical system and let $\mathbb{G}$ be a finite rearrangement of $\mathbb{F}$. Then, $(X,\mathbb{F})$ is minimal (equicontinuous) $\Leftrightarrow$ $(X,\mathbb{G})$ is minimal (equicontinuous).
\end{Cor}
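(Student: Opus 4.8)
The plan is to reduce the statement to the tail-equivalences already established in Proposition 1 (for minimality) and Proposition 2 (for equicontinuity), so that essentially no new dynamical work is needed. First I would unpack the hypothesis: a finite rearrangement $\mathbb{G}$ of $\mathbb{F}$ is the family $\{f_{\sigma(n)} : n\in\mathbb{N}\}$ for some permutation $\sigma$ of $\mathbb{N}$ that displaces only finitely many indices. Putting $k=\max\{n:\sigma(n)\neq n\}$ (and $k=0$ if $\sigma$ is the identity), the permutation $\sigma$ fixes every index exceeding $k$, and since it is a bijection it restricts to a permutation of $\{1,\ldots,k\}$; consequently $\mathbb{G}_k=\{f_{\sigma(n)}:n\geq k+1\}=\{f_n:n\geq k+1\}=\mathbb{F}_k$. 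I would also note that $\mathbb{G}$ is built from exactly the maps occurring in $\mathbb{F}$, so the standing surjectivity assumption on the family carries over to $\mathbb{G}$ and $\mathbb{G}_k$, and Propositions 1 and 2 apply to $\mathbb{G}$ as well.

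Next I would simply chain the equivalences. By Proposition 1 applied to $\mathbb{F}$, $(X,\mathbb{F})$ is minimal $\Leftrightarrow$ $(X,\mathbb{F}_k)$ is minimal; applied to $\mathbb{G}$, it gives $(X,\mathbb{G})$ is minimal $\Leftrightarrow$ $(X,\mathbb{G}_k)$ is minimal. Since $\mathbb{F}_k=\mathbb{G}_k$, the two middle systems coincide, and therefore $(X,\mathbb{F})$ is minimal $\Leftrightarrow$ $(X,\mathbb{G})$ is minimal. Replacing Proposition 1 by Proposition 2 and the word \emph{minimal} by \emph{equicontinuous} throughout yields the equicontinuous case verbatim.

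The argument is bookkeeping rather than analysis, so I do not anticipate a real obstacle. The one point deserving care is the combinatorial claim that a finitely supported permutation of $\mathbb{N}$ leaves a cofinite tail pointwise fixed, hence forces $\mathbb{F}_k=\mathbb{G}_k$; this is exactly where the word \emph{finite} in ``finite rearrangement'' is essential, and where the analogous statement for an infinite rearrangement fails (as the later examples in the paper show). It is also worth stressing in the write-up that, unlike the mixing and sensitivity analogues, this corollary requires no feeble-openness hypothesis, precisely because Propositions 1 and 2 are unconditional equivalences.
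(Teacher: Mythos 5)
Your proposal is correct and follows essentially the same route as the paper: the paper's preceding remark likewise observes that a finite rearrangement satisfies $\mathbb{G}_k=\mathbb{F}_k$ for some $k$ and then chains the unconditional tail-equivalences of Propositions 1 and 2 through the common truncated system. Your only addition is to spell out explicitly why a finitely supported permutation fixes a cofinite tail, which the paper asserts without elaboration.
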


\begin{Cor}
Let $\mathbb{F}$ be feeble open and let $\mathbb{G}$ be a finite rearrangement of $\mathbb{F}$. Then, $(X,\mathbb{F})$ exhibits any notion of mixing (sensitivity) if and only if $(X,\mathbb{G})$ exhibits identical notion of mixing (sensitivity).
\end{Cor}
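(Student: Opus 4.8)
The plan is to reduce the statement to the truncation equivalences already proved, using the observation (made in the preceding remark) that a finite rearrangement disturbs only an initial segment of the family. First I would fix a finite rearrangement $\mathbb{G}$ of $\mathbb{F}$: by definition there is a permutation $\sigma$ of $\mathbb{N}$ fixing all but finitely many integers with $g_n=f_{\sigma(n)}$, so choosing $k\in\mathbb{N}$ large enough that $\sigma$ fixes every integer exceeding $k$, we get $g_n=f_n$ for all $n>k$, that is, $\mathbb{G}_k=\mathbb{F}_k$.

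Next I would record two easy consequences of the feeble openness hypothesis. Feeble openness of a family is exactly the requirement that each of its members sends every non-empty open set to a set with non-empty interior, and $\mathbb{G}$ consists of precisely the same maps as $\mathbb{F}$ (merely reindexed); hence $\mathbb{G}$ is feeble open whenever $\mathbb{F}$ is, and likewise $\mathbb{F}_k$, being a subfamily of $\mathbb{F}$, is feeble open. Consequently the system/truncation equivalences for $(X,\mathbb{F})$ and those for $(X,\mathbb{G})$ are both at our disposal.

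Then I would chain the equivalences. Let $P$ be any one of transitivity, weak mixing, topological mixing, sensitivity, syndetic sensitivity, or cofinite sensitivity. By Proposition \ref{tran} and the corollaries following it, applied to the feeble open family $\mathbb{F}$, the system $(X,\mathbb{F})$ has property $P$ if and only if $(X,\mathbb{F}_k)$ has property $P$. Since $\mathbb{F}_k=\mathbb{G}_k$, this is the same as $(X,\mathbb{G}_k)$ having $P$, which by the same results applied to the feeble open family $\mathbb{G}$ is equivalent to $(X,\mathbb{G})$ having $P$. Stringing these biconditionals together yields that $(X,\mathbb{F})$ exhibits $P$ exactly when $(X,\mathbb{G})$ does, which is the assertion.

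The argument is essentially bookkeeping, so I do not anticipate a substantive obstacle; the two points deserving care are the selection of a single index $k$ that simultaneously truncates away the entire rearranged segment (so that $\mathbb{F}_k$ and $\mathbb{G}_k$ literally coincide rather than merely agreeing eventually) and the verification that feeble openness — being a condition imposed separately on each map — is preserved both under the reindexing passing from $\mathbb{F}$ to $\mathbb{G}$ and under truncation, since without feeble openness of $\mathbb{G}$ the converse half of the cited equivalences could not be invoked for the rearranged system.
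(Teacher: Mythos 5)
Your proposal is correct and follows essentially the same route as the paper: the paper's preceding remark likewise picks $k$ with $\mathbb{G}_k=\mathbb{F}_k$ and chains the truncation equivalences $(X,\mathbb{F})\Leftrightarrow(X,\mathbb{F}_k)=(X,\mathbb{G}_k)\Leftrightarrow(X,\mathbb{G})$. Your explicit check that $\mathbb{G}$ inherits feeble openness from $\mathbb{F}$ (so the converse direction of the cited equivalences applies to the rearranged system) is a detail the paper leaves implicit, and it is worth stating.
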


\begin{Cor}
Let $\mathbb{F}$ be commutative family of bijective maps and let $\mathbb{G}$ be a finite rearrangement of $\mathbb{F}$. Then, $(x,y)$ is proximal for $(X,\mathbb{F})$ $\Leftrightarrow$ $(x,y)$ is proximal for $(X,\mathbb{G})$.
\end{Cor}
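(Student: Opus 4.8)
The plan is to reduce the statement about a finite rearrangement $\mathbb{G}$ of $\mathbb{F}$ to the truncated-family comparison already carried out in Proposition \ref{prox}, exactly as was done for mixing and sensitivity in the preceding remarks. First I would observe that if $\mathbb{G}$ is a finite rearrangement of $\mathbb{F}$, then there exists $k\in\mathbb{N}$ such that $\mathbb{G}_k=\mathbb{F}_k$; indeed a finite rearrangement only permutes the maps up to some finite position, so past that position the two families agree term by term. I would also note that rearranging a finite set of bijective maps preserves bijectivity of each $f_i$, and that a permutation of a commuting family is again commuting, so $\mathbb{G}$ is again a commutative family of bijective maps and Proposition \ref{prox} applies to $\mathbb{G}$ as well as to $\mathbb{F}$.

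Next I would chain the equivalences. By Proposition \ref{prox} (using that $\mathbb{F}$ is commutative and each $f_i$ bijective), $(x,y)$ is proximal for $(X,\mathbb{F})$ if and only if $(x,y)$ is proximal for $(X,\mathbb{F}_k)$. Likewise, since $\mathbb{G}$ is a commutative family of bijections, $(x,y)$ is proximal for $(X,\mathbb{G})$ if and only if $(x,y)$ is proximal for $(X,\mathbb{G}_k)$. Since $\mathbb{F}_k=\mathbb{G}_k$ as families, the systems $(X,\mathbb{F}_k)$ and $(X,\mathbb{G}_k)$ are literally identical, so $(x,y)$ is proximal for $(X,\mathbb{F}_k)$ if and only if it is proximal for $(X,\mathbb{G}_k)$. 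Concatenating these three equivalences yields $(x,y)$ is proximal for $(X,\mathbb{F})$ if and only if $(x,y)$ is proximal for $(X,\mathbb{G})$, which is the claim.

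The only genuine point requiring care — the ``main obstacle,'' though it is mild — is justifying that a finite rearrangement really does produce a family that eventually coincides with $\mathbb{F}$, i.e.\ pinning down the definition of ``finite rearrangement'' so that the cofinite-agreement claim $\mathbb{F}_k=\mathbb{G}_k$ is legitimate, and simultaneously checking that the hypotheses of Proposition \ref{prox} (commutativity and bijectivity) transfer to $\mathbb{G}$. Both are straightforward once the bookkeeping is set up: a finite rearrangement is a bijection of $\mathbb{N}$ fixing all sufficiently large integers, hence induces the identity on the tail, and it clearly does not change the \emph{set} of maps used, so commutativity and bijectivity of all members are retained. After that, the proof is purely a matter of applying Proposition \ref{prox} twice and using the identity $\mathbb{F}_k=\mathbb{G}_k$.

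I would therefore write the proof as: ``Let $\mathbb{G}$ be a finite rearrangement of $\mathbb{F}$, so there exists $k\in\mathbb{N}$ with $\mathbb{F}_k=\mathbb{G}_k$. Since $\mathbb{G}$ consists of the same maps as $\mathbb{F}$, it is again a commutative family of bijective maps, so Proposition \ref{prox} applies to both $(X,\mathbb{F})$ and $(X,\mathbb{G})$. Hence $(x,y)$ is proximal for $(X,\mathbb{F})\Leftrightarrow (x,y)$ is proximal for $(X,\mathbb{F}_k)=(X,\mathbb{G}_k)\Leftrightarrow (x,y)$ is proximal for $(X,\mathbb{G})$.'' This keeps the argument to a few lines, consistent with the corollary's intended role as a direct consequence of the truncation results.
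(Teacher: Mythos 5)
Your proposal is correct and follows exactly the route the paper intends: the preceding Remark reduces any finite rearrangement to the truncation comparison via $\mathbb{G}_k=\mathbb{F}_k$ for some $k$, and the corollary is then two applications of Proposition \ref{prox} chained through the common tail, just as you write. Your added check that commutativity and bijectivity transfer to $\mathbb{G}$ (since a rearrangement uses the same set of maps) is a point the paper leaves implicit, and it is worth stating.
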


\begin{Remark}
The above results derive sufficient conditions under which a dynamical notion is preserved under finite rearrangement. Consequently, while minimality and equicontinuity are preserved unconditionally, various notions of mixing (sensitivity) are preserved when the family $\mathbb{F}$ is feeble open. However, the result is true when $\mathbb{G}$ is a finite rearrangement of $\mathbb{F}$ and the dynamical notions discussed need not be preserved under the stated conditions when the rearrangement $\mathbb{G}$ is an infinite rearrangement. We now an give example to establish our claim.
\end{Remark}

\begin{ex}
Let $X =\{0,1\}^{\mathbb{Z}}$ be the collection of two-sided
sequences of $0$ and $1$ endowed with the product topology. Let
$\sigma : X \rightarrow X$ be defined as
$\sigma(\ldots x_{-2}x_{-1}.x_0 x_1 x_2\ldots)= (\ldots
x_{-2}x_{-1}x_0. x_1 x_2\ldots)$. The map $\sigma$ is the shift operator and
is continuous with respect to the product topology on $X$. Let
$\mathbb{F}=\{\sigma, \sigma^{-1},\sigma,\sigma,\sigma^{-1},\sigma^{-1},\ldots\}$. Thus, the family $\mathbb{F}$ is defined by defining $f_i=\sigma$ when $n(n+1)+1\leq i\leq (n+1)^2$ and $f_i=\sigma^{-1}$ when $(n+1)^2+1\leq i\leq (n+1)(n+2)$. Then, as $\omega_{n(n+1)}(x)=x$ and $\omega_{n(n+1)+r}(x)=\sigma^r(x)$ for $1\leq r\leq n+1$, for any open set $U$ we obtain, $\omega_{n(n+1)+r}(U)=\sigma^r(U)$ for $1\leq r\leq n+1$ and hence the system $(X,\mathbb{F})$ exhibits all forms of mixing and sensitivity. However, as there are equal number of $\sigma$ and $\sigma^{-1}$ between $f_{n(n+1)}$ and $f_{(n+1)(n+2)}$ ($n$ each), the family $\mathbb{F}$ can be rearranged to obtain $\mathbb{G}=\{\sigma, \sigma^{-1},\sigma, \sigma^{-1},\ldots\}$. As $(X,\mathbb{G})$ does not exhibit any form of mixing or sensitivity, any form of mixing or sensitivity need not be preserved under infinite rearrangement. Further, it may be noted that $(X,\mathbb{F})$ is strongly sensitive and hence is not equicontinuous. However, as orbit of any $x$ in $(X,\mathbb{G})$ is $\{x,\sigma(x)\}$, the system $(X,\mathbb{G})$ is equicontinuous and hence equicontinuity is not preserved under infinite rearrangements even when the maps $f_i$ are bijective. Hence the conditions under which the dynamical behavior is preserved for finite rearrangements strictly work when $\mathbb{G}$ is a finite rearrangement and need not preserve the dynamics when the family $\mathbb{F}$ is infinitely rearranged.
\end{ex}

%
%
%
%

\section{Conclusion}

In this work, we investigated the dynamics arising from various possible alterations and rearrangements arising from a given non-autonomous dynamical system $(X,\mathbb{F})$. We prove that if $(X,\mathbb{G})$ is obtained by inserting/deleting finitely many maps from the family $\mathbb{F}$, under certain conditions, the modified system exhibits behavior similar to $(X,\mathbb{F})$ and hence the dynamics is preserved under such modifications. We prove that while minimality and equicontinuity are preserved unconditionally, proximality is preserved when the family $\mathbb{F}$ is commutative and injective. We prove that various notions of mixing and sensitivities are equivalent for the two systems when the family $\mathbb{F}$ is feeble open. We prove that the results established do not hold good when the conditions imposed are relaxed and hence the conditions imposed are indeed necessary for the results to hold good. We generalize our results to the case when the family $\mathbb{G}$ is a finite rearrangement of $\mathbb{F}$. We prove that the results obtained hold good strictly for finite rearrangements and fail to hold true when the rearrangement is infinite.

\bibliography{xbib}

\end{document}